\newcommand\blfootnote[1]{%
 \begingroup
 \renewcommand\thefootnote{}\footnote{#1}%
 \addtocounter{footnote}{-1}%
 \endgroup
}
\newtheorem{theorem}{Theorem}
\newtheorem{corollary}{Corollary}
\newtheorem{definition}{Definition}
\newtheorem{conjecture}{Conjecture}
\newtheorem{open}{Open problem}
\begin{document}

\title{Antimagic Labelings of Caterpillars}

\author{Antoni Lozano\thanks{Computer Science Department,
Universitat Polit\`ecnica de Catalunya, Spain, {\tt antoni@cs.upc.edu}.} \and
Merc\`e Mora\thanks{Mathematics Department, Universitat Polit\`ecnica de Catalunya, Spain, {\tt
merce.mora@upc.edu}.}
\and Carlos Seara\thanks{Mathematics Department, Universitat Polit\`ecnica de Catalunya, Spain, {\tt
carlos.seara@upc.edu}.}}

\maketitle

\blfootnote{\begin{minipage}[l]{0.3\textwidth} \includegraphics[trim=10cm 6cm 10cm 5cm,clip,scale=0.15]{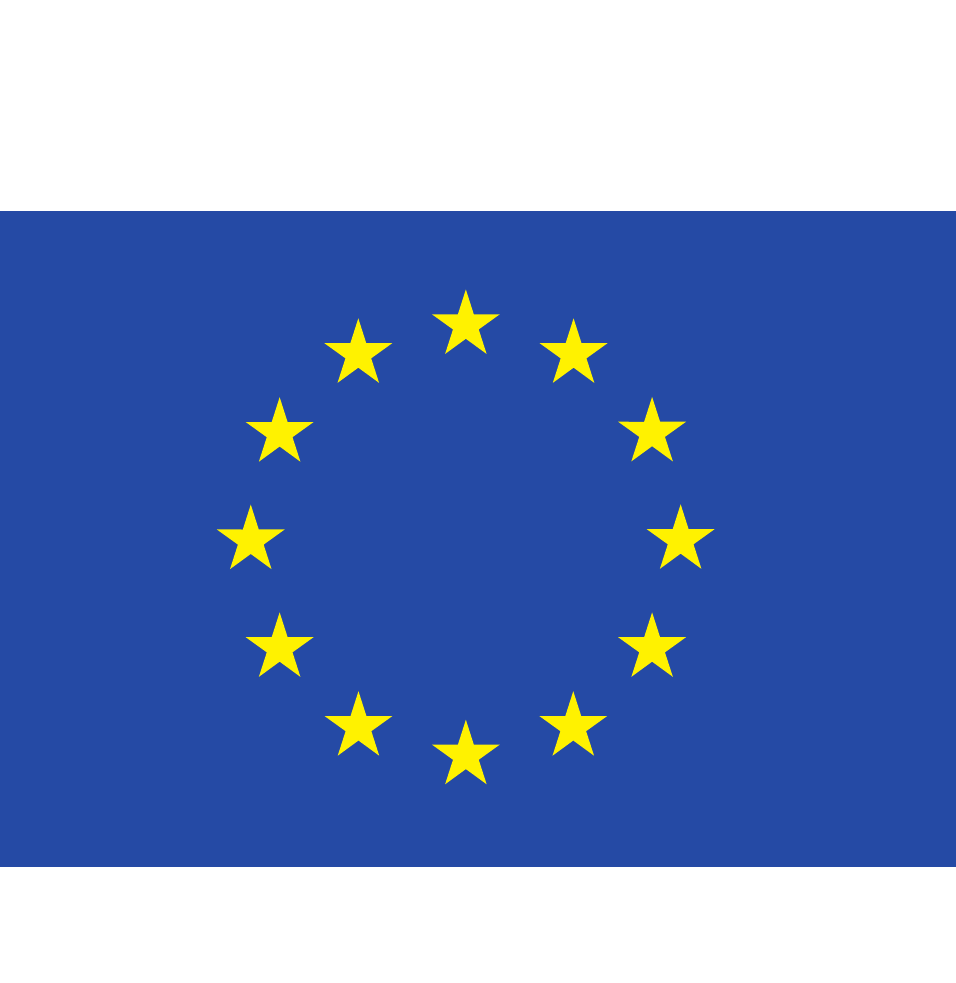} \end{minipage}  \hspace{-2cm} \begin{minipage}[l][1cm]{0.7\textwidth}
      This project has received funding from the European Union's Horizon 2020 research and innovation programme under the Marie Sk\l{}odowska-Curie grant agreement No 734922.
 \end{minipage}}


\begin{abstract}\noindent
A {\em $k$-antimagic labeling} of a graph $G$ is an injection from $E(G)$ to $\{1,2,\dots,|E(G)|+k\}$ such that all vertex sums are pairwise distinct, where the \emph{vertex sum} at vertex $u$ is the sum of the labels assigned to edges incident to $u$. We call a graph \emph{$k$-antimagic} when it has a $k$-antimagic labeling, and \emph{antimagic} when it is $0$-antimagic. Hartsfield and Ringel~\cite{HR} conjectured that every simple connected graph other than $K_2$ is antimagic, but the conjecture is still open even for trees. Here we study $k$-antimagic labelings of \emph{caterpillars}, which are defined as trees the removal of whose leaves produces a path, called its \emph{spine}. As a general result, we use algorithmic aproaches, i.e., \emph{constructive approaches}, to prove that any caterpillar of order $n$ is $(\lfloor (n-1)/2 \rfloor - 2)$-antimagic. Furthermore, if $C$ is a caterpillar with a spine of order $s$, we prove that when $C$ has at least $\lfloor (3s+1)/2 \rfloor$ leaves or $\lfloor (s-1)/2 \rfloor$ consecutive vertices of degree at most $2$ at one end of a longest path, then $C$ is antimagic. As a consequence of a result by Wong and Zhu~\cite{WZ}, we also prove that if $p$ is a prime number, any caterpillar with a spine of order $p$, $p-1$ or $p-2$ is $1$-antimagic.
\end{abstract}

\section{Introduction}\label{sec:int}

All graphs considered in this paper are finite, undirected, connected and simple. Given a graph $G=(V(G),E(G))$ and a vertex $v\in V(G)$, $E_G(v)$ denotes the set of edges incident to $v$ and $d_G(v)=|E_G(v)|$ stands for the degree of $v$ in $G$ (we will just write $E(v)$ and $d(v)$ when $G$ is clear from context).

A \emph{$k$-antimagic labeling} $G$ is an injection $f:E(G)\rightarrow \{1,2,\dots,|E(G)|+k\}$ such that all vertex sums are pairwise distinct, where the \emph{vertex sum} at vertex $v$ is $\sum_{e \in E(v)} f(e)$. A $0$-antimagic labeling is called \emph{antimagic labeling}. A graph is called \emph{$k$-antimagic} if it has a $k$-antimagic labeling, while it is just called \emph{antimagic} if it has an antimagic labeling. The following conjecture from Hartsfield and Ringel~\cite{HR} is well known.

\begin{conjecture}~\cite{HR}\label{conj1}
Every connected graph other than $K_2$ is antimagic.
\end{conjecture}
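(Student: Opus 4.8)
Since Conjecture~\ref{conj1} is a long-standing open problem, what follows is the line of attack I would pursue rather than a complete proof. The plan is to split connected graphs into two regimes according to edge density and to use different machinery in each, reserving a direct constructive treatment for the sparse boundary case. In the dense regime I would push the algebraic method: view the edge labels as variables $x_e$ and consider the polynomial $P = \prod_{\{u,v\}}\left(\sum_{e\in E(u)} x_e - \sum_{e\in E(v)} x_e\right)$, whose nonvanishing at an injective assignment certifies pairwise distinct vertex sums. Applying the Combinatorial Nullstellensatz to $P$ reduces the existence of an antimagic labeling to locating a suitable top-degree monomial, and this is the mechanism behind the known facts that regular graphs and graphs of minimum degree of order $\log n$ are antimagic. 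First I would make the density threshold explicit, so that everything above it is handled uniformly and the genuinely hard class is isolated: connected graphs of small minimum degree, and in the extreme, trees.

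For the sparse case I would attempt a rooted, leaf-to-root constructive labeling, exactly in the spirit of the caterpillar arguments developed in this paper. Ordering the edges by a BFS layering, one assigns labels from the smallest upward and uses the leaf edges as correction slack to force each vertex sum, as it is closed off, into a distinct target window. The goal would be to promote the single-spine bookkeeping used here for caterpillars to an induction on the number of internal vertices of an arbitrary tree, peeling off a deepest branch at each step while maintaining a system of pairwise-disjoint windows for the sums that are not yet finalized.

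The hard part --- and the reason the conjecture is still open even for trees --- is that antimagic labelings resist induction: deleting an edge or a vertex perturbs every incident vertex sum at once, so a valid labeling of a subtree need not extend, and in the $0$-antimagic case there is no numerical slack at all, the labels being forced to be a permutation of $1,\dots,|E(G)|$. The crux is thus to maintain an invariant --- an assignment of disjoint target windows to the unfinished sums --- that provably survives each peeling step using only the labels not yet spent. Controlling the arithmetic of these windows across internal vertices of degree $2$, where a vertex has essentially one free incoming and one free outgoing edge and hence almost no freedom to correct collisions, is precisely where such an argument breaks down, and explains why the tree case has so far yielded only to case-by-case treatments such as the ones in this paper.
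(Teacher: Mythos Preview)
The statement you were asked to address is Conjecture~\ref{conj1}, not a theorem: the paper presents it as the Hartsfield--Ringel conjecture and provides no proof, because none is known. So there is no ``paper's own proof'' to compare your proposal against, and you were right to flag at the outset that you are sketching a line of attack rather than a proof.

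That said, your outline does not advance beyond what is already in the literature, and you are candid about this. The dense regime you describe is precisely the territory covered by Alon et al.~\cite{AKLRY} and its successors; the Nullstellensatz reduction to locating a top-degree monomial is standard but notoriously hard to execute once the minimum degree drops. Your sparse regime is the real content of your proposal, and here the plan --- peel a deepest branch, maintain disjoint target windows for unfinished vertex sums --- is exactly the kind of inductive bookkeeping that has been tried and that you yourself diagnose as breaking at degree-$2$ internal vertices. You have correctly located the obstruction, but you have not proposed any mechanism to get past it; in particular, the ``invariant'' you say must be maintained is asserted rather than constructed, and without a concrete invariant the argument has no inductive step. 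The honest summary is that your write-up is a competent survey of why the problem is hard, not a proof proposal in the usual sense, and that is the appropriate response to being handed an open conjecture.
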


Classes of graphs which are known to be antimagic include: paths, stars, complete graphs, cycles, wheels, and bipartite graphs $K_{2,m}$, $m\ge 3$ (Hartsfield and Ringel~\cite{HR}); graphs of order $n$ with maximum degree at least $n-3$ (Yilma~\cite{Y}); dense graphs (i.e., graphs having minimum degree $\Omega(\log n)$) and complete partite graphs but $K_2$ (Alon, Kaplan, Lev, Roditty, and Yuster~\cite{AKLRY}, see also~\cite{E}); toroidal grid graphs (Wang~\cite{W});  lattice grids and prisms (Cheng~\cite{Ch07}); $k$-regular bipartite graphs of degree $k\ge 2$ (Cranston~\cite{C}); $k$-regular graphs, $k\ge 3$, with odd degree (Cranston, Liang, and Zhu~\cite{CLZ}); even degree regular graphs (Chang, Liang, Pan, and Zhu~\cite{ChLPZ}); cubic graphs (Liang and Zhu~\cite{LZ}); generalized pyramid graphs (Arumugam, Miller, Phanalasy, and Ryan~\cite{AMPR}); graph products (Wang and Hsiao~\cite{WH}); or Cartesian product of graphs (Cheng~\cite{Ch08} and Zhang and Sun~\cite{ZS}).

Probably the best known result for trees is due to Kaplan, Lev, and Roditty~\cite{KLR}, who proved that any tree having more than two vertices and at most one vertex of degree $2$ is antimagic (see also Liang, Wong, and Zhu~\cite{LWZ}). For antimagic labelings of complete $m$-ary trees see Chawathe and Krishna~\cite{ChK}. Using the Combinatorial Nullstellensatz polynomial method of Alon~\cite{A}, Llad\'o and Miller proved in~\cite{LM} a result about trees that we will point out in Section~\ref{section:general}. However, the conjecture is still open for the general class of trees.

\begin{conjecture}\label{conj2}
Every tree other than $K_2$ is antimagic.
\end{conjecture}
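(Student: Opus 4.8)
The plan is to attack Conjecture~\ref{conj2} by induction on the branching structure of the tree, reducing a general tree to building blocks that the constructive machinery developed here for caterpillars can handle. Since Kaplan, Lev, and Roditty~\cite{KLR} already settled every tree with at most one vertex of degree~$2$, the entire difficulty is concentrated in the degree-$2$ vertices: a tree may contain arbitrarily long induced paths of degree-$2$ vertices, and it is precisely these long \emph{subdivided} segments that make the vertex sums hard to separate when only the labels $1,\dots,|E(T)|$ are available. Accordingly, I would first contract each maximal path of degree-$2$ vertices and study the resulting \emph{topological tree}, whose internal vertices all have degree at least~$3$ and which is therefore covered by~\cite{KLR}; the task then becomes re-expanding those contracted paths without destroying distinctness of the sums.

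First I would root $T$ at a leaf and decompose it into maximal caterpillar-like components hanging off the high-degree branch vertices, labeling these pieces greedily from the leaves inward using the same injective-sum technique that underlies our caterpillar bounds. Along each degree-$2$ path I would reserve a controlled range of labels for its incident edges and tune them so that the running partial sums are strictly monotone along the path, treating each such path as an \emph{adjustable connector} whose free edge labels can shift the sum of the subtree below it by a prescribed amount. Second, to glue the pieces together, I would propagate sums from the leaves toward the root so that at each branch vertex the sum is forced into a distinct interval; a counting argument in the spirit of the fact that $n-1$ consecutive labels suffice because the tree has exactly $n$ vertices would then have to certify that all $n$ vertex sums land in pairwise disjoint windows.

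The hard part, and the reason the conjecture remains open, is exactly this separation of degree-$2$ vertices at global scale. At an interior vertex of a long degree-$2$ path the sum is just that of its two incident edge labels, so two consecutive such vertices, sharing one edge, have sums differing only by the two outer labels; forcing all of these pairwise differences nonzero simultaneously across many interacting paths and branch points is a genuinely global constraint rather than a local one, and the greedy schemes that work for caterpillars lose their slack once the spine itself can branch. I would try to overcome this either by an algebraic certificate, applying the Combinatorial Nullstellensatz of Alon~\cite{A} as in Wong and Zhu~\cite{WZ} to the polynomial whose nonvanishing encodes distinctness of all vertex sums and bounding the relevant coefficient, or by a refined case analysis separating the parities of the path lengths, as our antimagic results for caterpillars already suggest is necessary. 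I expect the algebraic route to be the most promising, but it is also the step where a clean, sufficiently general nonvanishing argument is currently missing, which is precisely why Conjecture~\ref{conj2} is still unresolved.
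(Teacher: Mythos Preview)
The statement you are addressing is a \emph{conjecture}, not a theorem: the paper states it as an open problem and offers no proof. There is therefore nothing in the paper to compare your proposal against. What you have written is not a proof either, and you say so yourself several times (``the reason the conjecture remains open'', ``a clean, sufficiently general nonvanishing argument is currently missing, which is precisely why Conjecture~\ref{conj2} is still unresolved''). So this is a research outline, not a proof attempt, and should be evaluated as such.

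As an outline it identifies the right obstruction --- long chains of degree-$2$ vertices --- and the natural decomposition into a topological tree plus subdivided edges. But the two proposed remedies are both hand-waved at the crucial step. The ``adjustable connector'' idea requires that each maximal degree-$2$ path have enough free labels to hit a prescribed target sum at its branch endpoint \emph{while simultaneously} keeping all internal sums of that path distinct from every other vertex sum in the tree; you give no mechanism for reserving disjoint label ranges for the many paths in a way that makes both constraints satisfiable at once, and the ``counting argument in the spirit of the fact that $n-1$ consecutive labels suffice'' is not an argument at all. The Nullstellensatz route is even less developed: the polynomial encoding distinctness of all vertex sums has degree $\binom{n}{2}$ in $n-1$ variables, so the total-degree form of the Combinatorial Nullstellensatz does not apply directly, and you would need a structural reduction (as Wong and Zhu do for paths of prime order) that you have not supplied. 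In short, your proposal correctly locates the difficulty but does not resolve it, which is consistent with the conjecture being open.
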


Hefetz~\cite{H} introduced the concept of $(w,k)$-antimagic labeling. Given a graph $G$ and a vertex weight function $w:V(G)\rightarrow \mathbb{N}$, an injection $f: E(G) \rightarrow \{1,2,\dots,|E(G)|+k\}$ is called a \emph{$(w,k)$-antimagic labeling} of $G$ if all vertex sums are pairwise distinct, where the {\em vertex sum} at vertex $v$ is defined, in this context, as $w(v)+\sum_{e \in E(v)} f(e)$. We say that $G$ is \emph{weighted-$k$-antimagic} if for any vertex weight function $w$, $G$ has a $(w,k)$-antimagic labeling. Hefetz~\cite{H} proved that every connected graph of order $n\ge 3$ is weighted-$(2n-4)$-antimagic; Berikkyzy, Brandt, Jahanbekam, Larsen, and Rorabaugh~\cite{BBJLR} improved this result to weighted-$(\lfloor 4n/3 \rfloor)$-antimagic. For more details on antimagic labelings for particular classes of graphs see the dynamic survey from Gallian~\cite{G}.

\begin{definition}
A \emph{caterpillar} $C$ is a tree of order at least 3 the removal of whose leaves produces a path, called the {\em spine} of $C$.
\end{definition}

In this paper we focus on \emph{caterpillars}, which constitute a well-known subclass of trees for which the above Conjecture~\ref{conj2} is still open. A relevant fact is that in our proofs we always use algorithmic approaches, i.e., \emph{constructive approaches}, instead of using the Combinatorial NullStellenSatz method which is the regular technique used in most of the references above. Thus, we obtain a different and more direct way for solving antimagicness problems which can give a clear insight on the difficulties for getting solutions.

\bigskip

\noindent{\bf Our contribution.}
We show in Section~\ref{section:general}, using constructive techniques,  that caterpillars of order $n>2$ are $(\lfloor \frac{n-1}{2} \rfloor-2)$-antimagic. Section~\ref{section:subclasses} is devoted to the study of some classes of caterpillars. Concretely, we prove that if $C$ is a caterpillar with a spine of order $s$ with at least $\lfloor (3s+1)/2 \rfloor$ leaves, then $C$ is antimagic (Subsection~\ref{subsection:leaves});   if $C$  is a caterpillar with a spine of order $s$ and $\lfloor (s-1)/2 \rfloor$ consecutive vertices of degree at most 2 at one end of a longest path, then $C$ is antimagic (Subsection~\ref{subsection:tail}); and  if $p$ is a prime number, any caterpillar with a spine of order $p$, $p-1$ or $p-2$ is $1$-antimagic (Subsection~\ref{subsection:prime}).

\section{Caterpillars}\label{section:general}

We introduce the concept of \emph{leaves excess} for caterpillars.

\begin{definition}
The \emph{leaves excess} of a caterpillar $C$ is $\mathcal{E}(C) =\sum_{d(u)\ge 4} (d(v)-3)$.
\end{definition}

Figure~\ref{fig:caterpillar} shows an example of leaves excess.

\begin{figure}[h]
\centerline{\includegraphics[width=12cm]{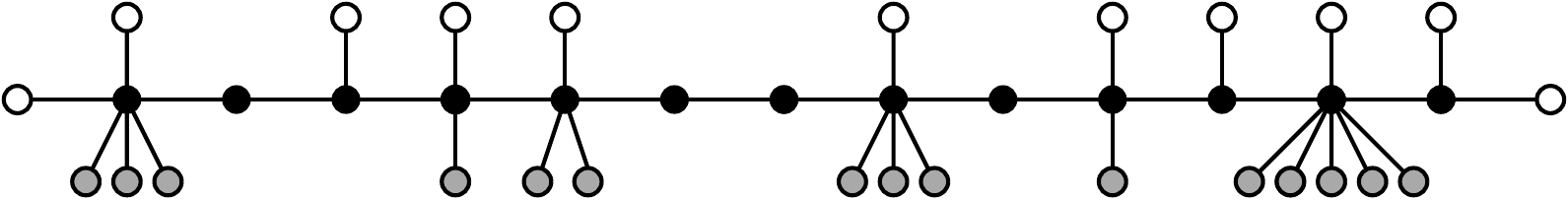}}
\caption{The number of grey leaves equals the leaves excess $\mathcal{E}(C)$.}
\label{fig:caterpillar}
\end{figure}

\begin{theorem}\label{th:caterpillars}
A caterpillar $C$ with a spine of order $s$ is $\max(0,\lfloor \frac{s-1}{2}\rfloor-1-\mathcal{E}(C))$-antimagic.
\end{theorem}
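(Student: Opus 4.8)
The plan is to label the edges of the caterpillar $C$ greedily, processing the spine vertices in order along a longest path $v_1, v_2, \dots, v_s$, so that at each stage the vertex sum of the spine vertex just completed is strictly larger than all previously assigned spine sums (and larger than all leaf sums as well). Let $n = |E(C)|$ and fix the target label set $\{1, 2, \dots, n + k\}$ with $k = \lfloor (s-1)/2 \rfloor - 1 - \mathcal{E}(C)$ (and if this is negative we simply want $k=0$, so assume $k \ge 0$, i.e.\ $\mathcal{E}(C) \le \lfloor (s-1)/2\rfloor - 1$; otherwise there is nothing beyond what Theorem~\ref{th:caterpillars} claims with $\max(0,\cdot)$). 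First I would separate the edges into \emph{pendant edges} (those joining a spine vertex to a leaf) and \emph{spine edges} $v_iv_{i+1}$. The key structural observation is that the leaves attached to a spine vertex $v_i$ are interchangeable, so their sums are forced to differ from each other — this is exactly what the leaves excess $\mathcal{E}(C)$ measures as an obstruction: spine vertices of degree $\ge 4$ carry more than one ``free'' leaf and hence need extra room in the label range, which is why $k$ is decreased by $\mathcal{E}(C)$.

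The concrete procedure I would carry out: assign the smallest available labels to \emph{one distinguished pendant edge at each spine vertex} and to the surplus pendant edges (those counted by $\mathcal{E}(C)$) in a way that makes all leaf sums small and pairwise distinct — for instance, give the $\mathcal{E}(C)$ surplus leaves and one leaf per low-degree spine vertex the labels $1,2,\dots$, keeping every leaf sum below some threshold $T$. Then I process the spine left to right: when I reach $v_i$, all edges incident to $v_i$ except $v_iv_{i+1}$ have already been assigned (the pendant edges at $v_i$ and the spine edge $v_{i-1}v_i$), so I choose the label of $v_iv_{i+1}$ to be the largest still-unused label, which forces the sum at $v_i$ to jump above everything assigned so far. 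Because each step consumes one large label and the spine has $s-1$ internal edges, and because the partial sums are monotone, the distinctness of spine sums is automatic; the leaf sums were already pinned below $T$ and each uses a genuinely small label, so they cannot collide with a spine sum. The last spine vertex $v_s$ has no outgoing spine edge, so its sum is determined once its pendant edges are labeled — I would handle $v_s$ (and symmetrically the interaction with $v_1$) as a special case, possibly by reserving one medium-range label for a pendant edge at $v_s$.

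Counting is what makes the bound come out: the ``large'' labels used to separate spine sums are $n+k, n+k-1, \dots$, and I need roughly $s-1$ of them to be distinct from the small labels $1,2,\dots$ used on the $\approx s + \mathcal{E}(C)$ distinguished pendant edges, plus the middle block of labels absorbs the remaining pendant edges without creating coincidences. The inequality $n + k \ge (\text{small block size}) + (\text{large block size})$ rearranges to exactly $k \ge \lfloor (s-1)/2 \rfloor - 1 - \mathcal{E}(C)$ after using $n \ge s + (\text{number of leaves})$ and bounding the number of leaves below in terms of $s$ and $\mathcal{E}(C)$; the floor and the $-1$ come from the parity of how the large and small blocks can be interleaved along the spine (roughly, every \emph{other} spine edge needs a genuinely ``new'' large label, the others can reuse medium labels). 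I would verify this count carefully for small $s$ (say $s \le 3$) by hand, since the greedy argument degenerates there.

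The main obstacle I anticipate is controlling the interaction between leaf sums and spine sums when a spine vertex has high degree: a degree-$d$ spine vertex $v_i$ with $d - 1$ pendant edges and one spine edge gets a sum that is a sum of $d$ labels, which can be large even before the ``separating'' large label is added, so the monotone-jump argument must be set up with enough slack, and the bookkeeping of which labels are ``small,'' ``medium,'' and ``large'' must be done so that the $\mathcal{E}(C)$ surplus leaves are precisely the ones that eat into the budget. Getting the off-by-one (the ``$-1$'' and the floor) exactly right, rather than proving a weaker bound, will require a careful accounting of how many distinct large labels are actually forced, and I expect that is where most of the technical work lies.
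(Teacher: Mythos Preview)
Your greedy argument has a real gap: the sentence ``choose the label of $v_iv_{i+1}$ to be the largest still-unused label, which forces the sum at $v_i$ to jump above everything assigned so far'' is not correct. If you process left to right and always use the largest remaining label on the outgoing spine edge, then the spine-edge labels form a decreasing sequence $a_1>a_2>\cdots>a_{s-1}$. The sum at $v_i$ (for $2\le i\le s-1$) is $a_{i-1}+a_i+(\text{pendants at }v_i)$, while the sum at $v_{i-1}$ is $a_{i-2}+a_{i-1}+(\text{pendants at }v_{i-1})$. Since $a_{i-2}>a_i$, the spine contribution actually \emph{drops} by at least $2$ at every step, so the jump you want goes the wrong way, and whether consecutive spine sums are distinct now depends entirely on the pendant contributions, over which your scheme exercises no control. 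Monotonicity of spine sums cannot be obtained this way.

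What the paper does instead is the idea you only hint at parenthetically (``every other spine edge needs a genuinely new large label''), but it makes that the heart of the construction rather than a remark about counting. The label set is split into four consecutive blocks $L_1,L_2,L_3,L_4$; the path edges are labeled by \emph{alternating} between $L_2$ and $L_4$, taking the largest unused from each in turn. The point of the alternation is that two consecutive path edges then carry one label from $L_2$ and one from $L_4$, and the resulting partial sums at the interior path vertices come out as a run of consecutive integers---automatically distinct, with no greedy choice involved. After that, the $\mathcal{E}(C)$ labels in $L_1$ are dumped on the surplus pendant edges (this is harmless: each spine vertex still has at most one unlabeled pendant), and the labels in $L_3$ are matched to the remaining pendant edges in sorted order against the already-distinct partial sums, so distinctness among degree-$\ge 3$ spine vertices is preserved. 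The final step is a single clean inequality showing that the largest possible degree-$2$ sum is strictly below the smallest possible degree-$\ge 3$ sum; this is where the exact block sizes (and hence the value of $k$, with its floor and its $-1$) are used. Your proposal never sets up the alternation that makes the partial sums consecutive, and without it the ``careful accounting'' you anticipate has no structure to account for.
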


\begin{proof}
Given a caterpillar $C$ with $m$ edges and $\ell$ leaves, let $P$ be a longest path in $C$. It is clear that $P$ has $m-\ell+3$ vertices (which will be called \emph{path vertices}) and $m-\ell+2$ edges (which will be called \emph{path edges}). Also, the number of edges of the spine is $m-\ell=s-1$. In order to define the injection from $E(C)$ to a label set, we choose the values:
\[ i = \Bigl\lceil \frac{m-\ell}{2} \Bigr\rceil + 1,
\;\;\; j = \Bigl\lceil \frac{m+\ell}{2} \Bigr\rceil - 1,
\;\;\; k = \max\left(\mathcal{E}(C),\left\lfloor \frac{m-\ell}{2}\right\rfloor-1\right), \]
and consider a label set $L = \bigcup_{r=1}^4 L_r$, where:

\begin{itemize}
\item $L_1 = \{ 1,\dots,k \}$
\item $L_2 = \{ k+1,\dots,k+i\}$
\item $L_3 = \{ k+i+1,\dots,k+j-\mathcal{E}(C)\}$
\item $L_4 = \{ k+j-\mathcal{E}(C)+1,\dots,k+m-\mathcal{E}(C)\}$
\end{itemize}

Since labels are consecutive across label sets $L_1$, $L_2$, $L_3$, and $L_4$, the total number of labels $|L|$ coincides with the value of the largest label, which equals
\[m + \max\left(\mathcal{E}(C),\left\lfloor \frac{m-\ell}{2} \right\rfloor - 1\right) - \mathcal{E}(C)
= m + \max\left(0,\left\lfloor \frac{m-\ell}{2} \right\rfloor - 1 - \mathcal{E}(C) \right). \]

Therefore, $C$ will be $\max(0,\lfloor \frac{m-\ell}{2} \rfloor-1-\mathcal{E}(C))$-antimagic if there is an injection from $E(C)$ to $L$. Now we show how to construct such an injection.

Labels in $L_2$ and $L_4$ will be assigned to the path edges, while labels in $L_1$ and $L_3$ will be assigned to the non-path edges. We consider two phases to complete the assignment:

\begin{itemize}
\item \emph{Phase 1: Labeling the path edges.}
The assignment of labels to the path $P$ will be done starting with the largest label in $L_2$, then with the largest label in $L_4$, then the previous ones and so on, keeping the alternation of labels until the first ones are reached. We consider two cases depending on whether $s$, the order of the spine, is odd or even. Note that $s=m-\ell+1$, $|L_2|=i$, and $|L_4|=m-j$.

\begin{itemize}
\item If $s$ is odd, then $m-\ell$ is even and from the definition of $i$ and $j$ we have that $i+j=m$. Thus, $|L_2| = i = m-j = |L_4|$ and we can alternate labels along the path $P$ in the following way:
\[ k+i, k+m-\mathcal{E}(C), \dots, k+1, k+j-\mathcal{E}(C)+1.\]

\item If $s$ is even, then $m-\ell$ is odd and $i+j=m+1$. Then, $|L_2| = i = m-j+1 = |L_4|+1$. The alternation of labels in $P$ now ends with the first label in $L_2$:
\[ k+i, k+m-\mathcal{E}(C), \dots, k+2, k+j-\mathcal{E}(C)+1, k+1.\]
\end{itemize}

The previous partial assignment uses all the labels in $L_2$ and $L_4$ to produce partial sums at the inner path vertices ranging between $2k+j+2-\mathcal{E}(C)$ and $2k+i+m-\mathcal{E}(C)$. In addition, the endpoints of path $P$ have the sums $k+i$ and $k+j-\mathcal{E}(C)+1$ (if $s$ is odd), or $k+i$ and $k+1$ (if $s$ is even).

Clearly, all partial sums at the path vertices are different. The vertex sums at the end of the path are smaller than the sums at the inner path vertices, which are obtained summing up two consecutive labels (the smallest sum is $2k+j+2-\mathcal{E}(C)$, which is greater than the possible sums at the extremes of the path: $k+i$, $k+j-\mathcal{E}(C)+1$, and $k+1$). On the other hand, the vertex sums at the inner vertices of $P$ of degree $2$ are all different since, by the way the assignment is defined, sums are strictly decreasing.

\item \emph{Phase 2: Labeling the non-path edges.}
Now, we add the labels in $L_1$ and $L_3$ to the non-path edges in the following way. In the first place, for every path vertex $u$ having degree $d(u) > 3$, we randomly assign labels from $L_1$ to $d(u)-3$ non-path edges incident to $u$. After this step, all the path vertices are incident with at most one non-path edge which has not yet been assigned a label.

Let $E'$ be the list of the still unlabeled non-path edges in non-increasing order of the partial sum of the only path-vertex incident to each of them. Let $L_3'$ be the list of still unused labels of $L_3$ in decreasing order. Assign each label of $L_3'$ to the edge of $E'$ at the same position in the lists. This assignment guarantees all path vertices of degree at least $3$ to have different sums.

Also note that the vertex sums obtained at the non-path vertices (each of them from a unique label in $L_1$ or $L_3$) are smaller than the vertex sums at the path vertices of degree at least $2$, which contain at least one label from $L_4$ and, hence, are greater than any label in $L_1$ or $L_3$.
\end{itemize}

We conclude the proof checking that sums (obtained in Phase 1) at the vertices of degree $2$ never coincide with sums (obtained in Phase 2) at the vertices of degree at least $3$. The largest vertex sum that can be achieved at a vertex of degree $2$ is obtained summing up the largest labels in $L_2$ and $L_4$
\[ (k+i) + (k+m-\mathcal{E}(C)) = 2k+i+m-\mathcal{E}(C) \]
while, on the other hand, the smallest vertex sum that can be achieved at a vertex of degree at least $3$ is obtained summing up the smallest labels in $L_2$, $L_3$, and $L_4$:
\[ (k+1) + (k+i+1) + (k+j-\mathcal{E}(C)+1) = 3k+i+j+3-\mathcal{E}(C). \]
But we have that $2k+i+m-\mathcal{E}(C) < 3k+i+j+3-\mathcal{E}(C)$ if and only if $m-3 < k+j$. By the definition of $j$ and $k$, this last inequality is true if
\[m-3 < \left\lfloor \frac{m-\ell}{2} \right\rfloor-1 +\left\lceil \frac{m+\ell}{2} \right\rceil-1  = m - 2,\]
which is true. Therefore, the sums at both kinds of vertices cannot coincide and the labeling we have constructed is an injection.

Recall that the order of the spine of $C$ is $s= m-\ell+1$. Since according to our labeling, $C$ is $\max(0,\lfloor \frac{m-\ell}{2} \rfloor -1 -\mathcal{E}(C))$-antimagic, we conclude that it is $\max(0,\lfloor \frac{s-1}{2} \rfloor-1-\mathcal{E}(C))$-antimagic and the proof is complete.
\end{proof}

\begin{corollary}
Caterpillars of order $n$  are $(\lfloor \frac{n-1}{2} \rfloor-2)$-antimagic.
\end{corollary}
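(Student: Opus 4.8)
The plan is to deduce the corollary immediately from Theorem~\ref{th:caterpillars}, using two elementary observations. The first is that being $k$-antimagic is monotone in $k$: a $k$-antimagic labeling $f:E(C)\to\{1,\dots,m+k\}$ is, in particular, an injection into the larger set $\{1,\dots,m+k'\}$ with pairwise distinct vertex sums whenever $k'\ge k$, so every $k$-antimagic graph is also $k'$-antimagic for all $k'\ge k$. Hence it suffices to show that the antimagic exponent produced by Theorem~\ref{th:caterpillars}, namely $\max(0,\lfloor\frac{s-1}{2}\rfloor-1-\mathcal{E}(C))$, is at most $\lfloor\frac{n-1}{2}\rfloor-2$ for every caterpillar $C$ of order $n$ with spine of order $s$.

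The second observation, which I would record explicitly, is the identity relating $n$ and $s$. As already noted inside the proof of Theorem~\ref{th:caterpillars}, if $C$ has $m$ edges and $\ell$ leaves then the spine has $m-\ell=s-1$ edges; since $m=n-1$ this gives $s=n-\ell$. Every tree on at least two vertices has at least two leaves, so $\ell\ge 2$ and therefore $s\le n-2$. Consequently
\[
\left\lfloor\frac{s-1}{2}\right\rfloor-1 \;\le\; \left\lfloor\frac{n-3}{2}\right\rfloor-1 \;=\; \left\lfloor\frac{n-1}{2}\right\rfloor-2,
\]
and subtracting $\mathcal{E}(C)\ge 0$ only decreases the left-hand side further. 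For $n\ge 5$ we have $\lfloor\frac{n-1}{2}\rfloor-2\ge 0$, so $\max\bigl(0,\lfloor\frac{s-1}{2}\rfloor-1-\mathcal{E}(C)\bigr)\le\lfloor\frac{n-1}{2}\rfloor-2$, and the monotonicity observation then yields the claim. The finitely many caterpillars of order $n\le 4$ (namely $P_3$, $P_4$ and $K_{1,3}$) are antimagic by the classical results quoted in the introduction, so the statement is to be understood for $n\ge 5$, the range in which $\lfloor\frac{n-1}{2}\rfloor-2\ge 0$ makes it meaningful.

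Since the entire constructive content lives in Theorem~\ref{th:caterpillars}, there is no genuine obstacle here; the only points needing care will be tracking the floor functions and the outer $\max$ with $0$ (so as not to inadvertently assert a labeling into too small a set for small caterpillars), and invoking $\ell\ge 2$ to turn the spine-order bound into an order bound. I would therefore present the proof as a short chain of inequalities immediately following the monotonicity remark and the identity $s=n-\ell$.
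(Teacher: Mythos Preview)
Your proposal is correct and follows exactly the intended route: the paper states the corollary without proof, treating it as an immediate consequence of Theorem~\ref{th:caterpillars}, and you have simply written out that deduction explicitly (monotonicity of $k$-antimagicness, together with $s=n-\ell\le n-2$). Your care about the small cases $n\le 4$, where $\lfloor\frac{n-1}{2}\rfloor-2<0$, is a point the paper glosses over; noting that the statement is meant for $n\ge 5$ is the right way to handle it.
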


As a detailed example of Theorem~\ref{th:caterpillars}, consider the caterpillar illustrated in Figure~\ref{fig:caterpillar}. This caterpillar has $39$ vertices, $38$ edges, $26$ leaves, and leaves excess equal to $15$. Thus, $i=\Bigl\lceil \frac{38-26}{2} \Bigr\rceil + 1=7$, $j= \Bigl\lceil \frac{38+26}{2} \Bigr\rceil - 1=31$ and $k=\max\left(15,\left\lfloor \frac{38-26}{2}\right\rfloor-1\right)=15$. The label sets are
$L_1 = \{ 1,\dots,15 \}$;
$L_2 = \{ 16,\dots, 22\}$;
$L_3 = \{ 23,\dots,31\}$;
$L_4 = \{ 32,\dots,38\}$.

\begin{figure}[ht]
\begin{center}
\includegraphics [width=0.95\textwidth]{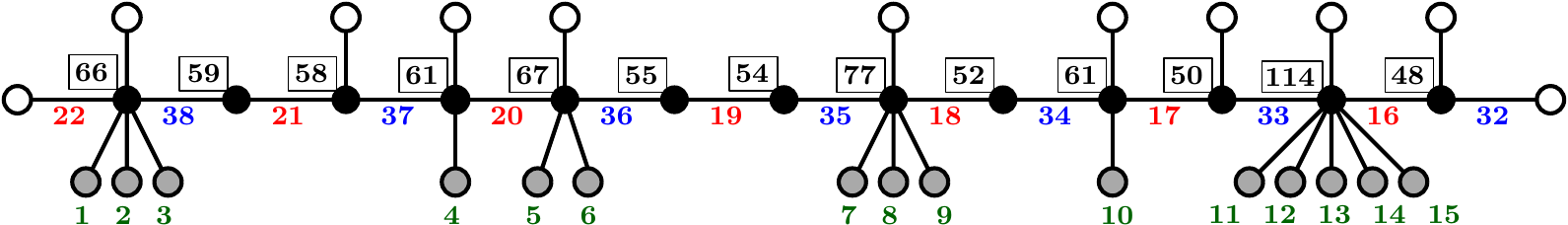}
\vspace{0.6cm}
\newline
\includegraphics [width=1\textwidth]{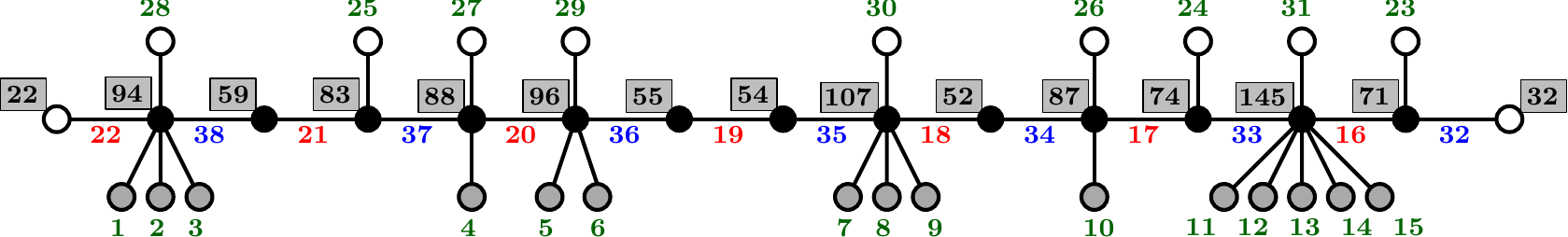}
\caption{Above, red labels belong to $L_2$; blue labels belong to $L_4$ and green labels belong to $L_1$. For each spine vertex, the partial sum is calculated. Below, labels from $L_3$ (also in green) are assigned to the remaining edges (only the sums are shown) and the final sum of each non-leaf vertex is shown (squared).}
\label{fig:exemple}
\end{center}
\end{figure}

First, we assign the labels of the sets  $L_2$ and $L_4$ to the edges of a longest path in the way pointed out in the proof of the theorem. Secondly, we consider a set of  ``leaves excess'' by choosing $d(u)-3$ leaves for each vertex $u$ having degree $d(u)\ge 4$, and we assign randomly the labels of the set $L_1$ to the edges incident to them. Finally, we assign the labels of $L_3$ to the remaining edges in decreasing order of the partial sum of their incident vertices from the longest path (see Figure~\ref{fig:exemple}).

\smallskip

The following result about trees was proved by Llad\'o and Miller~\cite{LM}.

\begin{theorem}(\cite{LM}\label{th:LM}, Thm. 7)
Trees of order $n\ge 3$ having exactly $k$ vertices of degree at least $2$ with no leaf adjacent to them are $k$-antimagic.
\end{theorem}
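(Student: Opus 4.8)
The plan is to prove this with Alon's Combinatorial Nullstellensatz~\cite{A}, the method behind most results of this kind. Write $m=n-1$ for the number of edges of the tree $T$, attach a variable $x_e$ to each $e\in E(T)$, and for a vertex $v$ put $S_v=\sum_{e\in E(v)}x_e$, a linear form. We will produce a $k$-antimagic labeling as a point of $\{1,\dots,m+k\}^{E(T)}$ at which the polynomial
\[
P \;=\; \Bigl(\,\prod_{e\ne e'}(x_e-x_{e'})\Bigr)\cdot\Bigl(\,\prod_{(u,v)\in\mathcal{Q}}(S_u-S_v)\Bigr)
\]
does not vanish; the first factor forces the labels to be distinct, and $\mathcal{Q}$ is a set of vertex pairs, to be specified, such that non-vanishing of the second factor together with a few elementary observations guarantees all vertex sums distinct. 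By the Nullstellensatz it then suffices to exhibit a monomial $\prod_e x_e^{t_e}$ of maximum total degree in $P$ with nonzero coefficient and $t_e\le m+k-1$ for every $e$, since the label set has size $m+k$.

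The first task — and an essential one — is to set up $\mathcal{Q}$ so that it is rich enough to force distinctness yet sparse enough that no variable's degree in $P$ exceeds $m+k-1$. The leaves help here: a leaf $w$ has vertex sum equal to the single label on its edge, so distinctness among leaves is already forced by the first factor of $P$; and the sum at the support vertex of $w$ strictly exceeds $S_w$ because all labels are positive, so those pairs need not be in $\mathcal{Q}$ either. The remaining pairs — essentially pairs of non-leaves, plus pairs of a leaf and a far non-leaf — are handled by a combination of a root-to-leaf processing order on $T$ (which separates many of them automatically) and the polynomial. The hypothesis enters precisely here: the $k$ vertices of degree at least $2$ with no leaf neighbour are exactly those for which such a free separation is unavailable, and they are what consume the slack $k$ in the label set; the case $k=0$ is the tight extreme, with no slack at all, and needs the most delicate bookkeeping.

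The technical heart, which I expect to be the main obstacle, is to prove that $P$ has a maximum-degree monomial with nonzero coefficient and with the required per-variable exponents. I would attack this by induction on $T$, peeling off at each step a pendant edge or a pendant subtree hanging from a lowest branch vertex and tracking how the coefficient of the distinguished monomial factors through the pieces. The induction hypothesis has to be strengthened to carry a prescribed integer \emph{weight} at the root of each subtree — the amount the subtree below contributes to the sum at that vertex — which is the weighted-antimagic viewpoint recalled in the introduction and is what lets the recursion close. One must also keep the monomial compatible with the Vandermonde factor, i.e. make sure it corresponds to a labeling whose labels are pairwise distinct. Once the coefficient is shown to be nonzero, the Nullstellensatz yields the desired labeling and the proof is complete.
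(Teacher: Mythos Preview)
This theorem is not proved in the present paper at all: it is quoted verbatim from Llad\'o and Miller~\cite{LM} (their Theorem~7) and used only as a point of comparison for Theorems~\ref{th:caterpillars} and~\ref{th:tail}. Consequently there is no ``paper's own proof'' for your attempt to be compared with. The only information the paper gives about how~\cite{LM} proves it is the remark in the introduction that Llad\'o and Miller use Alon's Combinatorial Nullstellensatz, which is indeed the framework you chose.

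As for the proposal itself, it is a plausible outline but not yet a proof. The decisive step---exhibiting a maximum-degree monomial of $P$ with nonzero coefficient and all exponents at most $m+k-1$---is exactly where all the work lies, and you only sketch an inductive strategy without specifying the set $\mathcal{Q}$, the monomial, or the invariant carried through the recursion. In particular, your claim that ``a root-to-leaf processing order separates many pairs automatically'' and that the $k$ leafless inner vertices are precisely what consume the slack is heuristic; turning this into a rigorous argument requires a concrete choice of $\mathcal{Q}$ and a careful count showing that each variable $x_e$ appears to degree at most $m+k-1$ in $P$, together with the nonvanishing of the relevant coefficient (typically via a permanent/Vandermonde expansion). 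If you want to complete this, you should consult~\cite{LM} directly for the actual polynomial and the coefficient computation.
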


In fact, Theorem~\ref{th:LM} is stated in terms of the number vertices with no leaf adjacent to them, the \emph{base inner vertices}. Theorem~\ref{th:caterpillars} and Theorem~\ref{th:LM} for caterpillars do not imply each other, as can be seen in the examples shown in Figure~\ref{fig:comparats}.

\begin{figure}[h]\label{fig:comparats}
\centerline{\includegraphics[width=15cm]{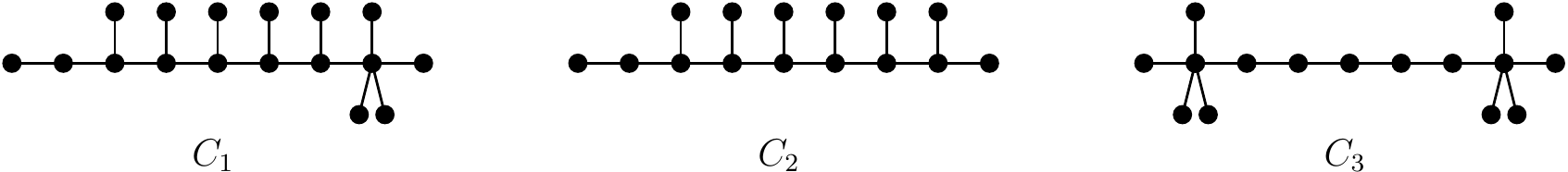}}
\caption{We can derive that $C_1$ is antimagic (from Theorems~\ref{th:caterpillars} and~\ref{th:LM}); $C_2$ is antimagic (from Theorem~\ref{th:LM}), and $2$-antimagic (from Theorem~\ref{th:caterpillars}); $C_3$ is antimagic (from Theorem~\ref{th:caterpillars}) and $5$-antimagic (from Theorem~\ref{th:LM}).}
\end{figure}

\section{Special subclasses}\label{section:subclasses}

\subsection{Caterpillars with many leaves}\label{subsection:leaves}

In the case that the parameter $\mathcal{E}(C)$ is large enough, we can show that caterpillar $C$ is antimagic by direct application of Theorem~\ref{th:caterpillars}.

\begin{corollary}\label{cor:leafy}
Caterpillars with a spine of order $s$ and $\mathcal{E}(C) \ge \left\lfloor \frac{s-1}{2} \right\rfloor - 1$ are antimagic.
\end{corollary}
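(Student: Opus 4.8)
The plan is to obtain this corollary as an immediate specialization of Theorem~\ref{th:caterpillars}. That theorem asserts that a caterpillar $C$ with a spine of order $s$ is $\max\bigl(0,\lfloor \frac{s-1}{2}\rfloor-1-\mathcal{E}(C)\bigr)$-antimagic, so the entire argument reduces to checking that, under the hypothesis $\mathcal{E}(C)\ge \lfloor \frac{s-1}{2}\rfloor-1$, the quantity inside the maximum is nonpositive.

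Concretely, I would first rewrite the hypothesis as $\lfloor \frac{s-1}{2}\rfloor-1-\mathcal{E}(C)\le 0$, which is just a rearrangement of $\mathcal{E}(C)\ge \lfloor \frac{s-1}{2}\rfloor-1$. Then $\max\bigl(0,\lfloor \frac{s-1}{2}\rfloor-1-\mathcal{E}(C)\bigr)=0$, so Theorem~\ref{th:caterpillars} says precisely that $C$ is $0$-antimagic, i.e.\ antimagic. No separate construction is needed, since the labeling furnished in the proof of Theorem~\ref{th:caterpillars} already lands in $\{1,\dots,|E(C)|\}$ in this regime (the label set $L$ collapses because $L_1=\{1,\dots,k\}$ with $k=\max(\mathcal{E}(C),\lfloor\frac{m-\ell}{2}\rfloor-1)=\mathcal{E}(C)$ there).

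There is essentially no obstacle here; the only point worth a sentence is the observation that the $\max$ in the statement of Theorem~\ref{th:caterpillars} is exactly what makes the ``leaf-rich'' case land at $k=0$. If desired, one could additionally translate the condition $\mathcal{E}(C)\ge\lfloor\frac{s-1}{2}\rfloor-1$ into a sufficient condition on the number of leaves $\ell$, using that each spine vertex $u$ with $d(u)\ge 4$ contributes $d(u)-3$ to $\mathcal{E}(C)$ and at least $d(u)-2$ leaves; this is the route toward the ``at least $\lfloor(3s+1)/2\rfloor$ leaves'' statement advertised in the introduction, but it is not required for the corollary as stated.
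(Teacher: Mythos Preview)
Your proposal is correct and matches the paper's own treatment: the corollary is stated as a direct application of Theorem~\ref{th:caterpillars}, with no separate proof given. Your additional remark that $k=\mathcal{E}(C)$ in this regime (so the label set is exactly $\{1,\dots,m\}$) is accurate and makes explicit why the construction from the theorem yields a genuine antimagic labeling.
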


The following corollary proves the result from Kov\'a\v{r} mentioned in~\cite{K}.

\begin{corollary}
Caterpillars with a spine of order $s$ and at least $\lfloor\frac{3s+1}{2}\rfloor$ leaves are antimagic.
\end{corollary}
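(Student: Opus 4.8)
The plan is to reduce this corollary to Corollary~\ref{cor:leafy} by showing that a large number of leaves forces the leaves excess $\mathcal{E}(C)$ to be at least $\lfloor\frac{s-1}{2}\rfloor-1$. First I would fix notation: let $v_1,\dots,v_s$ be the spine of $C$, and for each $i$ let $\ell_i$ denote the number of leaves of $C$ attached to $v_i$, so that the total number of leaves is $\ell=\sum_{i=1}^{s}\ell_i$. I would dispose of the degenerate case $s=1$ first (then $C$ is a star, which is antimagic by~\cite{HR}), and from now on assume $s\ge 2$. In that case each internal spine vertex has degree $d(v_i)=\ell_i+2$ and each of the two end spine vertices has degree $d(v_j)=\ell_j+1$, while every leaf has degree $1$.

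Next I would rewrite $\mathcal{E}(C)$ using these degrees. Since a vertex $u$ contributes $d(u)-3$ to $\mathcal{E}(C)$ precisely when $d(u)\ge 4$ (and leaves never contribute), we get
\[
\mathcal{E}(C)=\max(0,\ell_1-2)+\sum_{i=2}^{s-1}\max(0,\ell_i-1)+\max(0,\ell_s-2).
\]
Applying $\max(0,x)\ge x$ to each summand yields the clean bound
\[
\mathcal{E}(C)\ \ge\ (\ell_1-2)+\sum_{i=2}^{s-1}(\ell_i-1)+(\ell_s-2)\ =\ \ell-s-2.
\]

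Then I would substitute the hypothesis $\ell\ge\lfloor\frac{3s+1}{2}\rfloor$ and verify the arithmetic identity $\lfloor\frac{3s+1}{2}\rfloor-s-2=\lfloor\frac{s-1}{2}\rfloor-1$, which follows by writing $\frac{3s+1}{2}=\frac{s-1}{2}+(s+1)$ and pulling the integer $s+1$ out of the floor. Combining this with the displayed bound gives $\mathcal{E}(C)\ge\lfloor\frac{s-1}{2}\rfloor-1$, so Corollary~\ref{cor:leafy} applies and $C$ is antimagic.

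I do not expect a genuine obstacle here: the argument is essentially a degree count. The only points that need care are the asymmetric treatment of the two spine endpoints (their degree is one less than that of an internal spine vertex, which is why the $\ell_j-2$ terms appear), the small case $s=1$, and checking that the floor identity holds with equality — the latter is what makes the leaf hypothesis of the corollary exactly match the threshold in Corollary~\ref{cor:leafy}.
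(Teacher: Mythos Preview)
Your argument is correct and follows essentially the same route as the paper: both derive the inequality $\mathcal{E}(C)\ge \ell-s-2$ and then combine it with the floor identity $\lfloor\frac{3s+1}{2}\rfloor-s-2=\lfloor\frac{s-1}{2}\rfloor-1$ to invoke Corollary~\ref{cor:leafy}. The only cosmetic difference is that the paper obtains the bound via the exact identity $\mathcal{E}(C)=\ell-s_3-2$ (with $s_3$ the number of spine vertices of degree at least~$3$) together with $s_3\le s$, whereas you expand $\mathcal{E}(C)$ as a sum of $\max(0,\cdot)$ terms and drop the maxima; the resulting inequality is the same, and your separate handling of $s=1$ is harmless.
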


\begin{proof}
Let $C$ be a caterpillar with $\ell$ leaves and a spine of order $s$. Let $s_3$ be the number of vertices of the spine with degree at least $3$. From the definition of $\mathcal{E}(C)$, we have that $\mathcal{E}(C)=\ell - s_3 -2$ (see Figure~\ref{fig:caterpillar}). Then, since $s_3\le s$, we obtain
$$\mathcal{E}(C) =\ell - s_3-2\ge \Big\lfloor\frac{3s+1}{2}\Big\rfloor - s -2
= \Big\lfloor\frac{3s+1}{2}-s-1\Big\rfloor-1 = \Big\lfloor\frac{s-1}{2}\Big\rfloor -1.$$
Therefore, the conclusion can be obtained by applying Corollary~\ref{cor:leafy}.
\end{proof}

\subsection{Caterpillars with a long tail}\label{subsection:tail}

The technique used in Theorem~\ref{th:caterpillars} can be adapted to the case when a caterpillar has a ``tail''.

\begin{definition}
A \emph{tail} of order $t$ in a caterpillar $C$ is a path in $C$ containing one endpoint of a longest path in $C$ and $t-1$ more vertices with degree $2$ in $C$.
\end{definition}

\begin{theorem}\label{th:tail}
Caterpillars with a spine of order $s$ and a tail of order $\lfloor \frac{s-1}{2}\rfloor$ are antimagic.
\end{theorem}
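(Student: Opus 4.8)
The plan is to recycle the two-phase construction of Theorem~\ref{th:caterpillars}, the only new ingredient being that we route the smallest edge labels along the tail. Recall that in Theorem~\ref{th:caterpillars} the labels spill beyond $\{1,\dots,m\}$ only because the block $L_1$ of small padding labels is forced to have size $k=\max\bigl(\mathcal E(C),\lfloor\frac{m-\ell}{2}\rfloor-1\bigr)$: the term $\lfloor\frac{m-\ell}{2}\rfloor-1$ is needed solely to make the inequality $m-3<k+j$ hold, i.e.\ to keep the largest vertex sum at a spine vertex of degree $2$ below the smallest vertex sum at a spine vertex of degree at least $3$. A tail of order $t$ gives us $t-1$ consecutive degree-$2$ vertices $w_1,\dots,w_{t-1}$ at one end of a longest path $P=w_0w_1\cdots w_{s+1}$, with $w_0$ a leaf. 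If the $t$ tail edges receive the smallest labels, laid out so that the partial sums at $w_1,\dots,w_{t-1}$ form a strictly monotone run at the very bottom of the spectrum, these vertices drop out of the competition that created the dangerous degree-$2$ sums: the alternating large/small pattern now runs only over the remaining $s+1-t$ path edges, which pushes the sums at the remaining degree-$2$ vertices into a range low enough that the padding becomes unnecessary. One then takes $k=\mathcal E(C)$, so the label blocks partition exactly $\{1,\dots,m\}$ and the labeling is antimagic; the hypothesis $t=\lfloor\frac{s-1}{2}\rfloor$ is what makes the numbers fit.

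Concretely I would partition $E(C)$ into (i) the $t$ tail edges, (ii) the remaining $s+1-t$ path edges, (iii) the $\mathcal E(C)$ ``excess'' leaf edges at spine vertices of degree at least $4$, and (iv) the remaining leaf edges, one per spine vertex of degree at least $3$; then pick consecutive label blocks of exactly those sizes partitioning $\{1,\dots,m\}$, the smallest going to the tail edges and the largest to the path edges. Step~1: label the tail edges so that the sums at $w_1,\dots,w_{t-1}$ are pairwise distinct and occupy the bottom of the sum range. Step~2: label the remaining path edges with the alternating pattern of Theorem~\ref{th:caterpillars}, so that the inner partial sums on that portion of $P$ are distinct and strictly decreasing and the far endpoint $w_{s+1}$ gets a singleton sum. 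Step~3: exactly as in Phase~2 of Theorem~\ref{th:caterpillars}, assign the excess edges arbitrary labels from their block and then assign the one-per-vertex block in decreasing order to the remaining leaf edges, sorted by decreasing partial sum of the incident spine vertex; this makes all spine vertices of degree at least $3$ pairwise distinct. Step~4: check that no degree-$2$ sum equals a degree-$(\ge 3)$ sum and that the leaf sums (all singletons) avoid every other sum, re-running the inequality computation of Theorem~\ref{th:caterpillars} with the $t$-shortened alternating block.

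I expect Step~4 to be the obstacle, since once the padding is removed the separation between degree-$2$ and degree-$(\ge 3)$ sums is tight and the hypothesis on $t$ must be used carefully. Three points need attention: the junction vertex $w_t$, which is incident to one tail edge and one path edge and can itself have degree $2$ or more, so that its sum is a ``mixed'' one that must be slotted in correctly; the requirement that the two largest path labels never be incident to a common degree-$2$ vertex, so that the maximum degree-$2$ sum really is reduced; and the requirement that the small sums along the tail not coincide with the small leaf sums produced by the excess block. Choosing the exact block boundaries so that all of this is consistent and still lands inside $\{1,\dots,m\}$ is the delicate part; everything else is a routine transcription of the proof of Theorem~\ref{th:caterpillars}.
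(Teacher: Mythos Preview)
Your key insight is exactly the right one: the tail vertices all have degree at most $2$, so if the small labels land there, the \emph{remaining} degree-$2$ spine vertices carry partial sums large enough that the degree-$2$/degree-$\ge 3$ separation goes through without any padding. That is precisely what drives the paper's proof as well.

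Where you diverge from the paper is in the implementation, and your version is needlessly intricate. You split the edge set into four blocks (tail edges, remaining path edges, excess leaf edges, one-per-vertex leaf edges) and then have to worry about the junction vertex $w_t$, about tail sums colliding with excess-leaf sums, and about where exactly to place each block. The paper sidesteps all three of the difficulties you list in your Step~4 by \emph{not} separating the tail from the rest of the path at all. It uses only three consecutive blocks
\[
L_1=\{1,\dots,i\},\qquad L_2=\{i+1,\dots,j\},\qquad L_3=\{j+1,\dots,m\},
\]
with $i=\lceil(m-\ell)/2\rceil+1$ and $j=\lceil(m+\ell)/2\rceil-1$, and runs the \emph{same} $L_1/L_3$ alternation over the entire longest path~$P$, simply starting from the endpoint opposite the tail. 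The smallest labels in $L_1$ and $L_3$ then land on the tail automatically, so there is no junction vertex with a ``mixed'' sum and no separate tail block to collide with leaf sums. All leaf edges (both the excess ones and the one-per-vertex ones) receive labels from the single middle block $L_2$, with Phase~2 carried out exactly as in Theorem~\ref{th:caterpillars}. The only new computation is that the smallest Phase~1 partial sum at a \emph{non-tail} inner vertex is $j+\lfloor(m-\ell)/2\rfloor+1$, whence the minimum degree-$\ge 3$ sum is at least $(i+1)+\bigl(j+\lfloor(m-\ell)/2\rfloor+1\bigr)>i+m$, the maximum degree-$2$ sum; this is where the hypothesis $t=\lfloor(s-1)/2\rfloor=\lfloor(m-\ell)/2\rfloor$ is used. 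Your four-block scheme can probably be pushed through, but the paper's three-block version makes the obstacles you anticipated simply disappear.
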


\begin{proof}
Let $C$ be a caterpillar with $m$ edges and $\ell$ leaves. Let $P$ be a longest path in $C$. Using the same terminology as in Theorem~\ref{th:caterpillars}, we observe that $C$ has $m-\ell+3$ path vertices and $m-\ell+2$ path edges. Let $T$ be a tail of order $\lfloor \frac{s-1}{2}\rfloor$. Without loss of generality, $T$ can be chosen in such a way that it is a subgraph of $P$ and its endpoint coincides with one of the endpoints of $P$. Recall that $m-\ell=s-1$. In order to define the injection from $E(C)$ to a label set, we choose the values:
\[ i = \Bigl\lceil \frac{m-\ell}{2} \Bigr\rceil + 1, \;\;\;\; j = \Bigl\lceil \frac{m+\ell}{2} \Bigr\rceil - 1, \]
and consider a label set $L = \bigcup_{r=1}^3 L_r$, where:

\begin{itemize}
\item $L_1 = \{ 1,\dots,i \}$
\item $L_2 = \{ i+1,\dots,j\}$
\item $L_3 = \{ j+1,\dots,m\}$
\end{itemize}

Since labels are consecutive across label sets $L_1$, $L_2$, and $L_3$, the total number of labels is $|L| = m$, the value of the largest one. Therefore, $C$ will be antimagic if there is an injection from $E(C)$ to $L$. Now we show how to construct such an injection.

\medskip
Labels in $L_1$ and $L_3$ will be assigned to the path edges, while labels in $L_2$ will be assigned to the non-path edges. We consider two phases to complete the assignment:

\begin{itemize}
\item \emph{Phase 1: Labeling the path edges.}
By assumption, the vertex having degree $1$ in the tail $T$ is one of the endpoints of $P$. We choose the other endpoint in $P$, say $u$, to start our labeling. We assign labels to the path $P$ starting with the largest label in $L_1$, which will be assigned to the edge incident to $u$, then the largest label in $L_3$ will be assigned to the next edge in $P$, then the previous ones and so on, keeping the alternation of labels until the first ones are reached. Later on, we will use the fact that the smallest labels in $L_1$ and $L_3$ have been assigned to the tail $T$. Note that $|L_1|=i$ and $|L_3|=m-j$. Moreover,

\begin{itemize}
\item If $s$ is odd, then $m-\ell$ is even and from the definition of $i$ and $j$ we have that $i+j=m$. Thus, $|L_1| = i = m-j = |L_3|$ and we can alternate labels along the path $P$ in the following way:
\[ i, m, \dots, 1, j+1.\]

\item If $s$ is even, then $m-\ell$ is odd and $i+j=m+1$. Then, $|L_1| = i = m-j+1 = |L_3|+1$. The alternation of labels in $P$ now ends with the first label in $L_1$:
\[ i, m, \dots, 2, j+1, 1.\]
\end{itemize}

The previous partial assignment uses all the labels in $L_1$ and $L_3$ to produce partial sums at the inner path vertices ranging between $j+2$ and $i+m$. In addition, the endpoints of path $P$ have the sums $i$ and $j+1$ (in case $s$ is odd), or $i$ and $1$ (in case $s$ is even).

Clearly, all partial sums at the path vertices are different. The vertex sums at the endpoints of the path are smaller than the sums at the inner path vertices, which are obtained summing up two consecutive labels (the smallest sum is $j+2$, which is greater than the possible sums at the extremes of the path: $i$, $j+1$, and $1$). On the other hand, the vertex sums at the inner vertices of $P$ of degree $2$ in $C$ are all different since, by the way the assignment is defined, sums are strictly decreasing.

\item \emph{Phase 2: Labeling the non-path edges.}
Now, we add the labels in $L_2$ to the non-path edges in the following way. In the first place, for every path vertex $u$ having degree $d(u) > 3$, we randomly assign labels from $L_2$ to $d(u)-3$ non-path edges incident to $u$. After this step, all the path vertices are incident with at most one non-path edge which has not yet been assigned a label.

Let $E'$ be the list of the still unlabeled non-path edges in non-increasing order of the partial sum of the only path-vertex incident to each of them. Let $L_2'$ be the list of still unused labels of $L_2$ in decreasing order. Assign each label of $L_2'$ to the edge of $E'$ at the same position in the lists. This assignment guarantees all path vertices of degree at least $3$ to have different sums.

Also note that the vertex sums obtained at the non-path vertices (each of them from a unique label in $L_2$) are smaller than the vertex sums at the path vertices of degree at least $2$, which contain at least one label from $L_3$ and, hence, they are greater than any label in $L_2$.
\end{itemize}

We conclude the proof checking that sums (obtained in Phase 1) at the vertices of degree $2$ never coincide with sums (obtained in Phase 2) at the vertices of degree at least $3$. On the one hand, the largest vertex sum that can be achieved at a vertex of degree $2$ is $i+m$, since only labels in $L_1$ and $L_3$ are used. On the other hand, the smallest sum that can be obtained at a vertex of degree at least $3$ will be achieved in a vertex belonging to a path vertex but not to the tail. This sum will be composed of two parts:

\begin{enumerate}
\item The smallest label in $L_2$, $i+1$, and

\item The smallest sum obtained in Phase 1 at a vertex not belonging to the tail. The sums obtained at the tail vertices are (from the vertex in $T$ having degree $1$ in $C$) $1,j+2,j+3,j+4,\dots$ or $j+1,j+2,j+3,j+4,\dots$ depending on the parity of the tail order. In any case, the sum $j + \lfloor \frac{m-\ell}{2} \rfloor+ 1$ is the smallest one obtained in Phase 1 for a vertex not belonging to the tail (which has order $\lfloor \frac{m-\ell}{2} \rfloor$).
\end{enumerate}

Therefore, the smallest sum of a path vertex with degree at least $3$ and not belonging to the tail is $(i+1) + (j + \lfloor \frac{m-\ell}{2} \rfloor + 1)$. Then, we need that $i+m < i+j+\lfloor \frac{m-\ell}{2} \rfloor + 2$, which is true if and only if
\[m < \left\lceil \frac{m+\ell}{2} \right\rceil - 1 + \left\lfloor \frac{m-\ell}{2} \right\rfloor + 2 = m + 1, \]
which is true. Therefore, the sums at both kinds of vertices cannot coincide and the labeling we have constructed is an injection.

The order of the spine of $C$ is $s = m-\ell+1$. Since we have assumed that the tail $T$ has order $\lfloor \frac{m-\ell}{2} \rfloor$, this is equivalent to $T$ having order $\lfloor \frac{s-1}{2} \rfloor$, as it is assumed in the statement of the theorem.
\end{proof}

Theorems~\ref{th:caterpillars}, ~\ref{th:LM}, and~\ref{th:tail} for caterpillars do not imply each other, as can be seen in the example shown in Figure~\ref{fig:tail}. An antimagic labeling  for the caterpillar of this example obtained with the procedure described in the proof of Theorem~\ref{th:tail} is also shown.

\begin{figure}[h]
\centerline{\includegraphics[width=8.5cm]{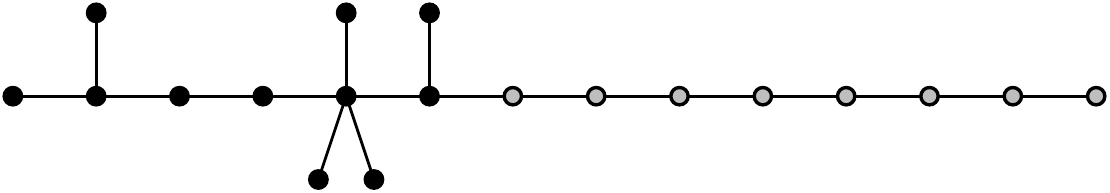}}
\vspace{0.55cm}
\centerline{\includegraphics[width=9.5cm]{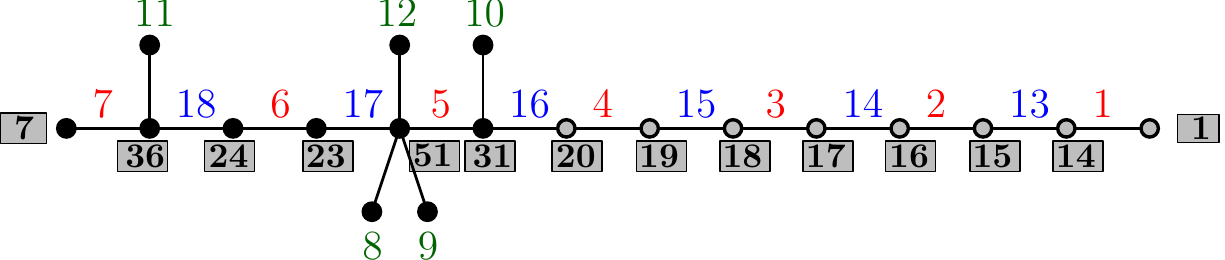}}
\caption{A caterpillar with a tail of order $8$ (gray vertices). Theorem~\ref{th:tail} implies that it is antimagic. Nevertheless, we can only derive that it is $2$-antimagic from Theorem~\ref{th:caterpillars} and $9$-antimagic from Theorem~\ref{th:LM}. An antimagic labeling is given: red labels correspond to $L_1$, blue labels to $L_3$, and green labels to $L_2$. The vertex sums at vertices belonging to a longest path are shown (squared numbers).}
\label{fig:tail}
\end{figure}

\subsection{Caterpillars with a spine of prime order}\label{subsection:prime}

The following theorem is proved by Wong and Zhu~\cite{WZ} using the Combinatorial Nullstellensatz method.

\begin{theorem}(\cite{WZ}\label{th:prime_path}, Thm. 15)
If $n$ is a prime number, then $P_n$ (the path on $n$ vertices) is weighted-$1$-antimagic.
\end{theorem}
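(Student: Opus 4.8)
\medskip
\noindent\textbf{Proof idea.} The plan is to prove Theorem~\ref{th:prime_path} via Alon's Combinatorial Nullstellensatz~\cite{A}, in the graph‑polynomial style. First I would normalize the label set. For a weight $w\colon V(P_n)\to\mathbb{N}$, writing $g(e)=f(e)-1$ and $w'(v)=w(v)+d_{P_n}(v)$ turns a $(w,1)$‑antimagic labeling $f\colon E(P_n)\to\{1,\dots,n\}$ into an injection $g\colon E(P_n)\to\{0,1,\dots,n-1\}$ whose weighted sums $w'(v)+\sum_{e\in E(v)}g(e)$ are pairwise distinct, and conversely. As $w$ ranges over $\mathbb{N}^{V(P_n)}$, the reduction of $w'$ modulo $n$ ranges over all of $\mathbb{F}_n^{V(P_n)}$; and since $n$ is prime, $\{0,1,\dots,n-1\}$ is exactly the field $\mathbb{F}_n$. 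So it is enough to prove: for every $w'\colon V(P_n)\to\mathbb{F}_n$ there is an injection $g\colon E(P_n)\to\mathbb{F}_n$ with pairwise distinct weighted vertex sums in $\mathbb{F}_n$ (distinctness modulo $n$ implies distinctness in $\mathbb{Z}$).

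Next, writing the path as $v_1v_2\cdots v_n$, put a variable $x_t$ on the edge $v_tv_{t+1}$ and let $S_1=w'(v_1)+x_1$, $S_t=w'(v_t)+x_{t-1}+x_t$ for $2\le t\le n-1$, and $S_n=w'(v_n)+x_{n-1}$ be the weighted vertex‑sum forms. Consider
\[
 P=\Bigl(\textstyle\prod_{1\le i<j\le n-1}(x_i-x_j)\Bigr)\Bigl(\textstyle\prod_{1\le i<j\le n}(S_i-S_j)\Bigr)\in\mathbb{F}_n[x_1,\dots,x_{n-1}].
\]
Any point of $\mathbb{F}_n^{n-1}$ at which $P$ does not vanish gives an injective $g$ with distinct weighted sums. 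Now $\deg P=\binom{n-1}{2}+\binom{n}{2}=(n-1)^2$ in $n-1$ variables, so the only monomial of this degree all of whose exponents are strictly below $|\mathbb{F}_n|=n$ is $x_1^{n-1}x_2^{n-1}\cdots x_{n-1}^{n-1}$. By the Combinatorial Nullstellensatz it therefore suffices to prove that
\[
 c_n:=\bigl[x_1^{n-1}x_2^{n-1}\cdots x_{n-1}^{n-1}\bigr]\,P\;\not\equiv\;0\pmod n .
\]

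The third step removes the weights: each $w'(v_t)$ appears only in the constant term of $S_t$ and so contributes nothing to the top‑degree coefficient, so we may replace $S_t$ by its homogeneous part $\overline{S}_t$ (that is, $\overline{S}_1=x_1$, $\overline{S}_t=x_{t-1}+x_t$, $\overline{S}_n=x_{n-1}$). Thus $c_n$ is the coefficient of $\prod_i x_i^{n-1}$ in the fixed product of linear forms $\bigl(\prod_{i<j}(x_i-x_j)\bigr)\bigl(\prod_{i<j}(\overline{S}_i-\overline{S}_j)\bigr)$, a purely numerical quantity independent of $w$. I would then evaluate $c_n$ modulo $n$, exploiting the telescoping identities $\overline{S}_t-\overline{S}_{t+1}=x_{t-1}-x_{t+1}$ for $2\le t\le n-2$ together with the boundary values $\overline{S}_1-\overline{S}_2=-x_2$ and $\overline{S}_{n-1}-\overline{S}_n=x_{n-2}$; these let one rewrite the second factor through differences of the $x_i$ and open the way to an induction on the length of the path, to a Lindstr\"om--Gessel--Viennot lattice‑path count, or to the explicit coefficient‑extraction formula attached to the Nullstellensatz (a signed sum of evaluations over a grid of points of $\mathbb{F}_n$).

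The heart of the proof --- and the main obstacle --- is precisely this evaluation of $c_n\bmod n$. One cannot hope to bypass it by working over $\mathbb{Q}$: a short computation gives $c_3=2$ but $c_4=0$, so the coefficient genuinely vanishes for some composite orders, and any correct argument must use the field structure of $\mathbb{F}_n$ in an essential way --- not merely to license the Nullstellensatz --- when showing $c_n\not\equiv 0\pmod n$ for prime $n$. I expect the reductions above to be routine, with the real work going into either a closed form for $c_n$ modulo $n$ or a clean pairing / sign‑reversing argument that isolates a single surviving nonzero residue.
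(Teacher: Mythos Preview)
The paper does not prove Theorem~\ref{th:prime_path}; it quotes the result from Wong and Zhu~\cite{WZ} and uses it as a black box, noting only that the original proof proceeds ``using the Combinatorial Nullstellensatz method.'' There is therefore no proof in the paper to compare yours against line by line.

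Your framework is the standard Nullstellensatz set-up and is consistent with what the paper says about~\cite{WZ}: the shift to labels in $\{0,\dots,n-1\}$, the passage to $\mathbb{F}_n$, the polynomial $P$, the degree count $\deg P=(n-1)^2$, the fact that $x_1^{n-1}\cdots x_{n-1}^{n-1}$ is the unique admissible top monomial, and the observation that its coefficient $c_n$ depends only on the homogeneous parts $\overline{S}_t$ (hence not on $w$) are all correct and are indeed the routine part.

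The genuine gap is the one you name yourself: you have not established $c_n\not\equiv 0\pmod n$, and your own computation $c_4=0$ confirms that no characteristic-free argument can replace this step. Everything up to that point is scaffolding; the actual content of the Wong--Zhu theorem is precisely this nonvanishing, and until it is carried out your write-up is a proof outline rather than a proof. If you want a self-contained argument rather than a citation, that coefficient computation is what you must supply.
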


By converting a caterpillar into a weighted path, we are able to obtain a $1$-antimagic labeling for caterpillars whose spine order is close to a prime number.

\begin{corollary}
Let $p$ be a prime number. Any caterpillar with a spine of order $p$, $p-1$ or $p-2$ is $1$-antimagic.
\end{corollary}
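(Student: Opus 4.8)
The plan is to turn $C$ into a weighted path on $p$ vertices and apply Theorem~\ref{th:prime_path}. We may first dispose of $p=2$: the only caterpillars with a spine of order $p$, $p-1$ or $p-2$ are then double stars and stars, which are antimagic (hence $1$-antimagic) by Hartsfield and Ringel~\cite{HR} and Kaplan, Lev and Roditty~\cite{KLR}. So assume $p\ge 3$, and let $C$ be a caterpillar with $m$ edges, $\ell$ leaves and a spine $S$ of order $s\in\{p,p-1,p-2\}$; recall that $m=(s-1)+\ell$. In each case we carve out a sub-path $Q$ of $C$ on exactly $p$ vertices that contains all of $S$: if $s=p$, set $Q=S$; if $s=p-1$, let $Q$ be $S$ together with one pendant leaf attached to an endpoint of $S$; and if $s=p-2$, let $Q$ be a longest path of $C$, that is, $S$ extended by one pendant leaf at each endpoint (such a pendant leaf exists at a spine endpoint, since otherwise that endpoint would itself be a leaf of $C$). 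In all cases $Q\cong P_p$, it has $p-1$ edges, and \emph{every} edge of $C$ outside $Q$ is a leaf edge; write $t=m+1-p$ for the number of these off-$Q$ edges, so $t\in\{\ell,\ell-1,\ell-2\}$ respectively and $t\ge 0$.

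First I would label the off-$Q$ edges, assigning to them in an arbitrary order the $t$ smallest labels $1,\dots,t$. For each vertex $v$ of $Q$ let $w(v)$ be the sum of the labels placed on the off-$Q$ edges incident with $v$ (thus $w(v)=0$ when $v$ is an appended leaf, and $\sum_{v\in V(Q)}w(v)=1+2+\cdots+t$). It remains to label the $p-1$ edges of $Q$ with $p-1$ of the $p$ remaining labels $t+1,\dots,t+p=m+1$ so that the $C$-sums at the vertices of $Q$, namely $w(v)+\sum_{e\in E_Q(v)}f(e)$, are pairwise distinct. The key point is that this is precisely a $(w',1)$-antimagic labeling problem for $Q\cong P_p$: writing $f=f_0+t$ and $w'(v)=w(v)+t\,d_Q(v)$, one has $w(v)+\sum_{e\in E_Q(v)}f(e)=w'(v)+\sum_{e\in E_Q(v)}f_0(e)$, where $f_0$ ranges over injections $E(Q)\to\{1,\dots,p\}$ and $w'$ is a vertex weight function on $V(Q)$. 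Since $p$ is prime, Theorem~\ref{th:prime_path} supplies such an $f_0$, and hence the desired $f$. Together, the two parts give an injection $f\colon E(C)\to\{1,\dots,m+1\}$ (one label of $\{t+1,\dots,m+1\}$ may go unused, which is allowed for a $1$-antimagic labeling).

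Finally I would check that all $C$-vertex sums are distinct. The sums at the vertices of $Q$ are pairwise distinct by construction. Every vertex of $C$ not on $Q$ is a leaf, so its sum equals a single label of $\{1,\dots,t\}$; these sums are pairwise distinct and each is at most $t$. On the other hand, a vertex of $Q$ with degree at least $2$ in $C$ is a spine vertex, hence incident with at least one edge of $Q$ (label $\ge t+1$) and with at least one further edge of $C$ (label $\ge 1$), so its sum is at least $t+2$; and a vertex of $Q$ with degree $1$ in $C$ is an appended leaf, whose sum is a single label of $\{t+1,\dots,m+1\}$ and hence also exceeds $t$. So no off-$Q$ sum can equal an on-$Q$ sum, and $f$ is a $1$-antimagic labeling of $C$.

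The main obstacle is not a single computation but making the reduction line up: one must choose $Q$ to have exactly $p$ vertices while still containing the whole spine, which is possible precisely for the three spine orders $p$, $p-1$, $p-2$; and one must split the labels so that the off-$Q$ (leaf) edges occupy the bottom block $\{1,\dots,t\}$ — this is what pushes every leaf sum below every other sum — while the top block $\{t+1,\dots,t+p\}$ on $Q$ can still be fed to Theorem~\ref{th:prime_path} by absorbing the additive shift $t\,d_Q(v)$ into the weights. The only genuine degenerate instance is $C=P_p$ (where $t=0$ and $w\equiv 0$), which is handled directly since paths are antimagic~\cite{HR}; in every other case $t\ge 1$ and the construction above applies verbatim.
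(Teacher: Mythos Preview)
Your proof is correct and follows essentially the same approach as the paper: you carve out a path $Q\cong P_p$ containing the spine, label the off-path leaf edges with the smallest labels $\{1,\dots,t\}$, absorb the additive shift into a vertex weight via $w'(v)=w(v)+t\,d_Q(v)$ (which is exactly the paper's $f(u)+r$ or $f(u)+2r$ according to whether $u$ is a $Q$-endpoint), and then invoke Theorem~\ref{th:prime_path}. Your treatment is in fact a bit more careful than the paper's in explicitly disposing of the small cases $p=2$ and $C=P_p$, but the core argument is identical.
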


\begin{proof}
Suppose that $p$ is a prime number. Let $C$ be a caterpillar with $m$ edges, $\ell$ leaves, and a spine $S$ of order $s=p-k$, where $0\le k\le 2$. Let $L=\{1,\dots,m+1\}$ be the label set.

Note that the spine $S$ is part of a longest path with two more vertices than $S$. Then, we define a path $P$ consisting into the spine $S$ of $C$ plus $k$ more vertices from the longest path of $C$ to which $S$ belongs. Thus, $P$ has order $p$, which is a prime number. Now, we assign weights to the vertices of this path $P$.

Let $r=\ell-k$, i.e., $r$ is the number of leaves of $C$ which are not endpoints of $P$. We assign a distinct label in $\{1,\dots,r\}$ to each of the edges incident to these leaves. This labeling produces distinct labels at the $r$ leaves and also a partial sum, called $f(u)$, at each vertex $u$ of the path $P$ (i.e., the sum of the labels of the non-path edges incident to $u$). Define the weight $w(u)$ for each path vertex $u$ of $P$ as follows:

\[ w(u) = \left\{\begin{array}{ll}
                        {f(u)+r}, & \mbox{if $u$ is an endpoint of $P$} \\
                        {f(u)+2r}, & \mbox{otherwise.}
                       \end{array}
                \right.
\]

Now, by Theorem~\ref{th:prime_path}, we get that the path $P$ is weighted-$1$-antimagic. Moreover, let $h:E(P)\rightarrow \{1,\dots,p\}$ be a $1$-antimagic labeling for $P$ according to Theorem~\ref{th:prime_path}. Then, we can argue that
\[ h'(e) = \left\{\begin{array}{ll}
                        {h(e) + r}, & \mbox{if $e \in E(P)$} \\
                        {f(u)}, & \mbox{otherwise, where $u \notin V(P)$ is incident to $e$}
                       \end{array}
                \right.
\]
is a $1$-antimagic labeling for $C$. First, notice that for any $e \in E(P)$,
$$h(e)+r \le p+r = (s+k)+(\ell-k) = s+\ell= (m-\ell+1)+\ell=|L|$$
and, therefore, $h':E(C)\rightarrow L$. Furthermore, all vertex sums are pairwise distinct due to the following facts:

\begin{enumerate}
\item Vertex sums for vertices in $V(C) \setminus V(P)$ have a value at most $r$ and are pairwise distinct since they correspond to the initial labeling of the
leaves of $C$ which are not endpoints of $P$.

\item Vertex sums for vertices in $V(P)$ have a value at least $r+1$ since every path vertex in $V(P)$ is adjacent to some path edge $e$, for which we have defined $h'(e)=h(e)+r>r$. Additionally, the vertex sum at a vertex $u \in V(P)$ using labeling $h'$ coincide with the vertex sum at $u$ using labeling $h$ and weight function $w$. We consider two cases:

\begin{enumerate}
\item $u$ is an endpoint of $P$. Let $e^u$ be the edge in $P$ incident to $u$. The vertex sum at $u$ in $C$ with labeling $h'$ is $h(e^u)+r+f(u)$, while the vertex sum at $u$ in $P$ with weight function $w$ and labeling $h$ is $h(e^u)+w(u)=h(e^u)+f(u)+r$.

\item $u$ is not an endpoint of $P$. Let $e^u_1$ and $e^u_2$ be the edges on the path $P$ incident to $u$. The vertex sum at $u$ in $C$ with labeling $h'$ is
\[(h(e^u_1)+r) + (h(e^u_2)+r) + f(u) = h(e^u_1)+h(e^u_2)+f(u)+2r.\]
On the other hand, the vertex sum at $u$ in $P$ with weight function $w$ and labeling $h$ is $h(e^u_1)+h(e^u_2)+w(u)=h(e^u_1)+h(e^u_2)+f(u)+2r$.
\end{enumerate}

Since $P$ is weighted-$1$-antimagic with the weight function $w$ via labeling $h$, all vertex sums at the vertices in $P$ must be pairwise distinct. Therefore, all vertex sums for vertices in $V(P)$ are also pairwise distinct with labeling $h'$ (and no weights on the vertices).
\end{enumerate}
We conclude that $h'$ is a $1$-antimagic labeling for $C$.
\end{proof}

\section{Open problem}

Although the main problem we leave open in this paper is whether caterpillars are antimagic, an intriguing particular case is the following.

\begin{open}
Are caterpillars with maximum degree $3$ antimagic?
\end{open}

\section{Acknowledgments}
Antoni Lozano is supported by the European Research Council (ERC) under the European Union's Horizon 2020 research and innovation programme (grant agreement ERC-2014-CoG 648276 AUTAR). Merc\`e Mora and Carlos Seara are supported by projects Gen. Cat. DGR 2017SGR1336, 2017SGR1640, MINECO MTM2015-63791-R, and H2020-MSCA-RISE project 734922-CONNECT.

\end{document}